\documentclass[10pt,twoside, reqno]{amsart}
\usepackage[matrix,arrow]{xy}
\usepackage{amssymb}
\usepackage{amsmath}
\usepackage{amsfonts}
\usepackage{epsfig}
\usepackage{color}
\usepackage{epstopdf}
\usepackage{graphicx}
\usepackage{amsthm}
\usepackage{enumerate}
\usepackage[mathscr]{eucal}
\usepackage{verbatim}
\usepackage{bm}

\numberwithin{equation}{section}

\usepackage[bookmarks=true,hyperindex,pdftex,colorlinks,citecolor=blue,
linkcolor=blue,urlcolor=blue]{hyperref}
\usepackage{tikz}
\usetikzlibrary{matrix,arrows.meta}

\newcommand{\vertiii}[1]{{\left\vert\kern-0.25ex\left\vert\kern-0.25ex\left\vert #1 
    \right\vert\kern-0.25ex\right\vert\kern-0.25ex\right\vert}}

\theoremstyle{plain}
\newtheorem{theorem}{Theorem}[section]
\newtheorem{cor}[theorem]{Corollary}

\newtheorem{lemma}[theorem]{Lemma}

\theoremstyle{definition}

\newtheorem{remark}[theorem]{Remark}
\newtheorem{definition}[theorem]{Definition}

\newcommand{\N}{\mathbb{N}}

\DeclareMathOperator{\conv}{conv}
\DeclareMathOperator{\dist}{dist}

\DeclareMathOperator{\re}{Re}

\DeclareMathOperator{\spann}{span}

\DeclareMathOperator{\MA}{\mathcal{MA}}

\begin{document}
\title{Weak minimizing property and reflexivity}

\author[V.~Kadets]{Vladimir Kadets}

\address{ \href{http://orcid.org/0000-0002-5606-2679}{ORCID: \texttt{0000-0002-5606-2679}} {School of Mathematical Sciences, Holon Institute of Technology (Israel)}}
\email{kadetsv@hit.ac.il}

\author[G.~Ribeiro]{Geivison Ribeiro}

\address{ \href{http://orcid.org/0000-0002-0345-7597}{ORCID: \texttt{0000-0002-0345-7597}} { Federal University of Maranhão (UFMA), Brazil}}
\email{geivison.ribeiro@academico.ufpb.br}

\keywords{Banach space, reflexive Banach space, norm-attaining operator, min-attaining operator}
\subjclass[2020]{ 46B20} 

\begin{abstract}
For an operator $T:X\to Y$ denote $m(T)=\inf\{\|Tx\|:x\in S_X\}$. A sequence $(x_n)_n$ in $S_X$ is said to be minimizing for $T$ if $\|Tx_n\|$ tends to $m(T)$. In 2020 U.S.~Chakraborty introduced and studied the following \emph{weak minimizing property} (WmP): a pair $(X,Y)$ of Banach spaces is said to have the WmP if, for every bounded linear operator  $T: X \to Y$ that admits a non-weakly null minimizing sequence, the function $x \mapsto \|Tx\|$ attains its minimum on the unit sphere. 
We present the following new results about the WmP for pairs of infinite-dimensional separable Banach spaces:

\begin{enumerate}
\item[(i)]   If $(X,Y)$ has the WmP, then $X$ is reflexive.

\item[(ii)] If $X$ is reflexive and $Y$ does not contain isomorphic copies of $X$, then $(X,Y)$ has the WmP.

\item[(iii)] If $X$ is reflexive and $Y$ contains an isomorphic copy of $X$, then there is an equivalent norm on $Y$ such that, for this equivalent norm,  $(X,Y)$ does not have the WmP.
\end{enumerate}

The first result extends to non-separable $X$ if and only if $X$ possesses a countable total set of functionals.
\end{abstract}
\maketitle

\section{Introduction}

For Banach spaces $X, Y$ over the same field of scalars, denote $L(X,Y)$ the space of all bounded linear operators  $T: X \to Y$ and, for every $T \in L(X,Y)$, denote $\phi_T$ the real-valued function $\phi_T(x) =  \|Tx\|$ defined on the unit sphere $S_X$ of $X$. In this notation, $ \|T\| = \sup\{\phi_T(x): x\in S_X\}$. For the infimum of $\phi_T$ there is no standard notation. In this paper, the notation 
$$
m(T)=\inf\{\|Tx\|:x\in S_X\}
$$ 
from \cite{Cha2020} will be used.

For a finite-dimensional $X$, $\phi_T$ attains its maximum and minimum, but for  infinite-dimensional $X$ this is not always the case. An operator $T \in L(X,Y)$ is called \emph{norm-attaining} if $\phi_T$ attains its maximum. There is a number of results regarding norm-attainment, which happens to be a non-trivial question even for functionals. Say, $X$ is reflexive if and only if every functional $f \in X^*$ attains its norm (R.C.~James \cite{James}); for every $X$ the set of norm-attaining functionals is dense in  $X^*$ (Bishop and Phelps \cite{bis-phe}), see \cite[Chapter 1]{Diestel}. For general operators, the  classification of pairs $(X,Y)$ for which the set of  norm-attaining operators $L_{NA}(X,Y)$ is dense in $L(X,Y)$ was originated by J.~Lindenstrauss \cite{lin2} who in particular presented the first examples in which the density is absent. This study was advanced by many authors and a number of great results were obtained, see the survey \cite{Ac2006} by María D. Acosta. Among these results let us mention the famous J.~Bourgain's theorem \cite{Bour} that $X$ has the Radon-Nikod\'ym property if and only if, for every equivalent norm on $X$ and for every Banach space $Y$, the set $L_{NA}(X,Y)$ is dense in $L(X,Y)$. Despite of extensive study the classification of those pairs that $\overline{L_{NA}(X,Y)} = L(X,Y)$ remains unaccomplished, with a striking example of an open problem of whether $\overline{L_{NA}(X,Y)} = L(X,Y)$ for every two-dimensional $Y$ and every $X$, see discussion in \cite{KLMW}.

An operator $T \in L(X,Y)$ is said to be \emph{min-attaining} if $\phi_T$ attains its minimum. Denote $\MA(X,Y)$ the set of all min-attaining operators $T \in L(X,Y)$. Although both norm attainment and min-attainment are natural optimization problems, the behavior of $\MA(X,Y)$ and $L_{NA}(X,Y)$ is not the same by the simple reason that $\MA(X,Y)$ contains all non-injective operators. We refer the reader to \cite[Section 1.2]{GMMR} by Domingo García, Manuel Maestre, Miguel Martín, and Óscar Roldán for references and a short account of corresponding results.

In this short article the following  \emph{weak minimizing property} is addressed.

\begin{definition}[{U.S.~Chakraborty, \cite[Definition 2.7]{Cha2020}}]
    Let $X$ and $Y$ be Banach spaces.
    \begin{itemize}
        \item For $T\in\mathcal{L}(X,Y)$, a sequence $(x_n)_n$ in $S_X$ is said to {minimize} $T$ if $\|Tx_n\|\to m(T)$. In this case, $(x_n)_n$ is called a {minimizing sequence} for $T$. 
        \item A pair $(X,Y)$ is said to have the {weak minimizing property} (WmP) if every bounded linear operator  $T: X \to Y$ that admits a non-weakly null minimizing sequence is min-attaining.
    \end{itemize}
\end{definition}

The WmP is a direct analog of the \emph{weak maximizing property} (WMP) that was introduced by Richard M. Aron, Domingo García, Daniel Pellegrino and Eduardo V. Teixeira  \cite{AGPT} as follows: a pair of Banach spaces $(E,F)$ is said to have
the WMP if for any $T \in L(E,F)$, the existence of a non-weakly null maximizing sequence for $T$ implies that $T$ attains its norm.

WmP is a useful tool for study of approximation of operators by members of $\MA(X,Y)$. In particular, as it was remarked by Manwook Han \cite{Han2026}, WmP of  $(X,Y)$ implies the density of $\MA(X,Y)$ in $L(X,Y)$.

In the already cited papers \cite{Cha2020} and especially \cite{Han2026} an extensive study of WmP was performed, in particular for all pairs of classical sequence spaces and for many other concrete pairs of spaces it was determined whether they possess the WmP or not. Below we complement this study by exhibiting the connection of  WmP of $(X,Y)$ with the reflexivity of $X$. Namely,
we demonstrate that

\begin{enumerate}
\item[--]   If $(X,Y)$ has the WmP, $X$ possesses a countable total set of functionals and $Y$ is infinite-dimensional, then $X$ is reflexive (Theorem \ref{thm-refl1}).

\item[--] If $X$ is reflexive and $Y$ does not contain isomorphic copies of $X$, then $(X,Y)$ has the WmP (Theorem \ref{thm-refl2}).

\item[--] If $X$ is reflexive and $Y$ contains an isomorphic copy of $X$, then there is an equivalent norm on $Y$ such that, for this equivalent norm,  $(X,Y)$ does not have the WmP  (Theorem \ref{thm-refl2}).
\end{enumerate}
In the preliminary version of the paper (\href{https://arxiv.org/abs/2604.18534v1}{ arXiv:2604.18534v1}), the first result for nonseparable $X$ was stated as an open problem. The solution of that problem by the second author leaded to the strengthening of the result and to the extension of the list of the authors. 

Remark that for $(X,Y)$ with the WMP the reflexivity of $X$ was demonstrated in \cite[Corollary 2.5]{AGPT}, but the proof does not work for WmP because it is based on the existence of not norm-attaining functionals on every nonreflexive space, which is evidently not true for the min-attainment.

As an application in Corollary \ref{thm-square} we present a new solution of \cite[Questions 3.2(2)]{AGPT} that provides a wide range of reflexive spaces $X$ for which $(X,X)$ does not have  the WMP.

\section{The results}

\begin{lemma}\label{lem-refl1}
Let $X$ be a non-reflexive separable Banach space. Then, there is a biorthogonal system $(u_n, u_n^*)_{n \in \N} \subset S_X \times X^*$ such that
\begin{enumerate}
\item[$(1)$] $(u_n)_{n \in \N}$ is a complete system of vectors in $X$ and  $(u_n^*)_{n \in \N} $ is a total system of functionals over $X$ (together this means that  $(u_n)_{n \in \N}$ is an $M$-basis of $X$).

\item[$(2)$]  $(u_n)_{n \in \N}$ is not a weakly null sequence.
\end{enumerate}
\end{lemma}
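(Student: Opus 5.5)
We first take an arbitrary $M$-basis of $X$ and then modify it by a triangular perturbation so that a bounded non-weakly-null sequence appears among the vectors. By Markushevich's theorem, every separable Banach space has an $M$-basis $(e_n, e_n^*)_{n\in\N}$ with $\|e_n\|=1$: $(e_n)$ is complete and $(e_n^*)$ is total. Since $X$ is not reflexive, $B_X$ is not weakly compact. Hence, by Eberlein--\v{S}mulian, there is a sequence in $B_X$ with no weakly convergent subsequence. We only need something weaker: a bounded sequence that is not weakly null. For that it suffices to fix one vector $z\in S_X$ and some $f\in X^*$ with $f(z)=1$. Non-reflexivity will enter through the requirement that the new vectors still form an $M$-basis while containing a non-weakly-null subsequence. (In a reflexive space every bounded $M$-basis is automatically shrinking-like and hence weakly null, so some nontrivial input is needed.)

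The plan is to construct $u_n$ from the $e_n$ as follows. Choose a vector $z\in S_X$ and a functional $f$ with $f(z)=1$. Put $v_{2k}=e_k + z$-type corrections and $v_{2k-1}=e_k$, interleaving in order to keep completeness. The danger is biorthogonality: adding $z$ to infinitely many vectors typically destroys the existence of biorthogonal functionals, since $z$ would have to be annihilated by all dual functionals. Totality of the dual system forces $z$ to be recoverable from $(u_n^*(z))_n$. So instead we exploit non-reflexivity: pick $x^{**}\in X^{**}\setminus X$. By Goldstine, $x^{**}$ is a weak$^*$ limit of a bounded net from $X$. In the separable case we take $F=\ker$ of a suitable functional, more concretely a norm-closed subspace $Z\subset X^*$ of codimension one in $X^*$ which is weak$^*$ dense: namely $Z=\ker x^{**}$. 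This is norm-closed and is weak$^*$ dense because $x^{**}\notin X$; since $X$ is separable, $Z$ contains a countable total set.

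Then we run the standard Markushevich biorthogonalization procedure (Gram--Schmidt type interlacing) with a dense sequence $(x_k)$ of $X$ on the vector side and a sequence $(g_k)\subset Z$ on the functional side, chosen total over $X$ and with span norm-dense in $Z$. The result is an $M$-basis $(u_n,u_n^*)$ with $u_n^*\in Z$ and $\overline{\spann}(u_n^*)=Z$. Because $Z$ is total, (1) holds. Normalize so that $\|u_n\|=1$, rescaling $u_n^*$ accordingly. This is where the condition $u_n\in S_X$ is met.

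For (2), suppose $(u_n)$ were weakly null. Each $x^*\in X^*$ can be written as $x^*=\lambda \varphi + z^*$ with $z^*\in Z$, where $\varphi\in X^*$ is fixed with $x^{**}(\varphi)=1$. For $z^*$ in $\spann(u_m^*)$ we have $z^*(u_n)=0$ eventually, and by density this gives $z^*(u_n)\to0$ for all $z^*\in Z$, provided $(u_n)$ is bounded, which it is. So weak nullity reduces to $\varphi(u_n)\to0$. Hence the actual constraint must be arranged, and this is the main obstacle: we must force $\varphi(u_n)\not\to0$ during the construction. The fix is to modify $u_n$ to $u_n+ w_n$, where $w_n\in\bigcap_{m\le N}\ker u_m^*$ has small norm relative to biorthogonality needs, yet $\varphi(w_n)\ge 1$. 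Such $w_n$ exist because $\varphi\notin\spann(u_1^*,\dots,u_N^*)$ when restricted to that finite-codimensional subspace; indeed $\varphi\notin Z$. We would carry this out inside the inductive step, choosing the vector at odd steps with $\varphi$-value bounded below and norm controlled via $\dist(\varphi,\spann(u_m^*)_{m\le N})\ge \dist(\varphi,Z)>0$. This uniform positive distance is exactly what non-reflexivity gives, and it keeps the norms of the $w_n$ bounded, so $\varphi(u_n/\|u_n\|)$ stays bounded away from $0$ along a subsequence.
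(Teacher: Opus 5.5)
Your route is genuinely different from the paper's. The paper gets a normalized basic sequence $(e_n)$ and $u^*\in X^*$ with $\inf_n \re u^*(e_n)>0$ from the Milman--Milman characterization of non-reflexivity. It then invokes the Gurarii--Kadets theorem to obtain an $M$-basis of $X$ containing $(e_n)$ as a subsequence; both ingredients are cited as black boxes.

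You instead run Markushevich's biorthogonalization by hand and keep every functional inside $Z=\ker x^{**}$ for some $x^{**}\in X^{**}\setminus X$. This hyperplane is norm-closed, so $d:=\dist(\varphi,Z)>0$ whenever $x^{**}(\varphi)=1$. It is also weak$^*$-dense, hence total. With $M_N=\bigcap_{m\le N}\ker u_m^*$ you therefore get the uniform bound $\|\varphi|_{M_N}\|=\dist(\varphi,\spann\{u_m^*\}_{m\le N})\ge d$, which is what produces the non-weakly-null vectors. This is the same device the paper uses in the non-separable half of Theorem~\ref{thm-refl1}, with $\ker x^{**}$ replacing the non-separability of $X^*$ as the source of a functional far from the span of the $u_m^*$. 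Your argument is elementary and self-contained, and it shows the $u_n^*$ can be taken in a prescribed weak$^*$-dense hyperplane. The paper's argument is shorter on the page and yields an $M$-basis containing a prescribed basic sequence.

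As written, though, two steps need repair.

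First, you cannot in general choose $(g_k)\subset Z$ with norm-dense span in $Z$, nor obtain $\overline{\spann}(u_n^*)=Z$. For $X=\ell_1$, $Z$ is a closed hyperplane of $\ell_\infty$ and is non-separable. Fortunately this is only used in your ``reduction'' paragraph and is unnecessary. To show $(u_n)$ is not weakly null you only need $\varphi(u_n)\not\to 0$ along a subsequence. You only need $(g_k)\subset Z$ to be total over $X$, and such a countable subset exists by the Lindel\"of property of $X\setminus\{0\}$, a point you assert without proof.

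Second, the perturbation $u_n\mapsto u_n+w_n$ with $\|w_n\|$ bounded and $\varphi(w_n)\ge 1$ does not give $|\varphi(u_n)|/\|u_n\|$ bounded below, because neither $\|u_n\|$ nor $\varphi(u_n)$ is controlled. Moreover, if it is done at a step that absorbs some $x_k$, it removes $x_k$ from the span and endangers completeness. The clean fix is to add a third kind of step. Given $(u_m,u_m^*)_{m\le N}$, take the new vector to be $w$ itself, with $w\in S_{M_N}$ and $\re\varphi(w)>d/2$, and choose its partner functional in $Z$.

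Choosing partners in $Z$ is also needed at your vector-absorbing steps, and you left it unjustified. It is possible because $Z$ is total: for finite-dimensional $F=\spann\{u_1,\dots,u_N,w\}$, the image of the restriction map $Z\to F^*$ has trivial annihilator in $F$, so the map is onto. Interleave these steps infinitely often with the usual absorbing steps, which still absorb every $x_k$ and $g_k$. Finally normalize $u_n\mapsto u_n/\|u_n\|$, $u_n^*\mapsto \|u_n\|u_n^*$, and the lemma follows.
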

\begin{proof}
According to D.P. and V.D.~Milman characterization of non-reflexive spaces \cite{MM1964}, there is a basic sequence $(e_n)$ in $X$ and a functional $u^* \in X^*$ such that $\|e_n\| = 1$ and $\inf_n \re u^*(e_n) > 0$ (see \cite[Lemma 1.1]{AcKa2011} for a short proof). Then $(e_n)_{n \in \N}$ is not a weakly null sequence.

It remains to apply V.I.~Gurarii and M.I.~Kadets' Theorem \cite{GuKa1962}, see \cite[Theorem 1.45]{HMVZ2008} which guaranties the existence of an $M$-basis $(u_n)_{n \in \N}$ of $X$ that contains $(e_n)$ as a subsequence.
\end{proof}

\begin{theorem}\label{thm-refl1}
Let $X$ be a non-reflexive Banach space that possesses a countable total set of functionals and $Y$ be an infinite-dimensional Banach space, then  $(X,Y)$ does not have the WmP.
\end{theorem}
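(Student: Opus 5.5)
The plan is to build an injective operator $T\colon X\to Y$ that has a non-weakly null sequence $(x_k)\subset S_X$ with $\|Tx_k\|\to 0$. Then $m(T)=0$, and by injectivity $m(T)$ is not attained, so $(X,Y)$ fails the WmP.

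\emph{The operator.} Since $Y$ is infinite-dimensional, it contains a normalized basic sequence $(y_n)$ (Mazur). Its coordinate functionals $y_n^*\in(\overline{\spann}\{y_n\})^*$ are bounded. Let $(f_n)\subset X^*$ be a countable total set; rescale so that $\|f_n\|\le 1$. Define
$$
Tx=\sum_{n=1}^\infty 2^{-n} f_n(x)\, y_n .
$$
The series converges absolutely, so $T\in L(X,Y)$. If $Tx=0$, then applying $y_n^*$ gives $f_n(x)=0$ for all $n$. Totality of $(f_n)$ then gives $x=0$, so $T$ is injective.

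\emph{The minimizing sequence.} Since $\|Tx\|\le\sum_n 2^{-n}|f_n(x)|$ and $|f_n(x)|\le 1$ on $S_X$, dominated convergence reduces everything to one claim. We need a non-weakly null sequence $(x_k)\subset S_X$ with $f_n(x_k)\to 0$ as $k\to\infty$ for each fixed $n$. Let $F$ be the norm-closed linear span of $(f_n)$ in $X^*$. Two cases arise.

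\emph{Case 1: $F\ne X^*$.} Pick $u^*\in X^*$ with $\dist(u^*,F)=\delta>0$. Put $W_k=\bigcap_{i\le k}\ker f_i$. By the Hahn--Banach duality for quotients, $(X/W_k)^*\cong W_k^{\perp}=\spann\{f_1,\dots,f_k\}$, hence
$$
\|u^*|_{W_k}\|=\dist(u^*,\spann\{f_1,\dots,f_k\})\ge\delta .
$$
So we can choose $x_k\in S_{W_k}$ with $|u^*(x_k)|\ge\delta/2$. Then $f_n(x_k)=0$ whenever $k\ge n$, while $(x_k)$ is not weakly null because of $u^*$.

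\emph{Case 2: $F=X^*$.} Then $X^*$ is separable, hence $X$ is separable. Lemma~\ref{lem-refl1} yields an $M$-basis $(u_n,u_n^*)$ with $(u_n)\subset S_X$ not weakly null. In this case I would define $T$ using the total sequence $f_n=u_n^*/\|u_n^*\|$ instead. Then $Tu_k=2^{-k}\|u_k^*\|^{-1}y_k$, so $\|Tu_k\|\le 2^{-k}\|u_k^*\|^{-1}\to 0$, using $\|u_k^*\|\ge u_k^*(u_k)=1$. Thus $(u_k)$ is the required minimizing sequence. Nonreflexivity is used exactly here: in this case it is what supplies a non-weakly null $(u_n)$.

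The main obstacle is the step producing the non-weakly null sequence that is annihilated asymptotically by the countable total set. The case split handles it. Case 1 is a Hahn--Banach distance argument. Case 2 reduces to the separable setting and is covered by the Lemma~\ref{lem-refl1}.
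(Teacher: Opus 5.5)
Your proof is correct and essentially the paper's: it uses the same operator $Tx=\sum_n 2^{-n}f_n(x)y_n$, the same Hahn--Banach distance argument giving $x_k\in S_{W_k}$ with $|u^*(x_k)|$ bounded below, and Lemma~\ref{lem-refl1} when $X$ is separable. The only difference is that you split cases by whether $\overline{\spann}\{f_n\}=X^*$ rather than by separability of $X$, which is harmless, since the paper's nonseparable case uses only $\overline{\spann}\{f_n\}\neq X^*$. One small slip: the identity $\|u^*|_{W_k}\|=\dist(u^*,W_k^\perp)$ comes from the isometric duality $W_k^*\cong X^*/W_k^\perp$, not from $(X/W_k)^*\cong W_k^\perp$.
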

\begin{proof} We will consider first the case of separable $X$ (when the existence of a countable total set of functionals is automatic, see \cite[Section 17.2.4, Corollary 2]{kadbook}) and the nonseparable case will come afterwards.  

\vspace{1 mm}
\noindent
\textbf{Proof for the separable $X$}. 
Pick $(u_n, u_n^*)_{n \in \N} \subset S_X \times X^*$ like in Lemma \ref{lem-refl1} and let $(v_n)_{n \in \N}$ be a normalized basic sequence in $Y$ (see \cite[Theorem 1.a.5]{LT1977}). Define $T: X \to Y$ by means of the formula
\begin{equation} \label{eq-def-T}
Tx = \sum_{n=1}^\infty \frac{u_n^*(x)}{2^n\|u_n^*\|} v_n,
\end{equation}
where the absolute convergence of the series and the inequality $\|T\| \le 1$ may be checked as follows:
$$
\|Tx\| \le \sum_{n=1}^\infty \frac{\left|u_n^*(x)\right|}{2^n\|u_n^*\|}\le \sum_{n=1}^\infty \frac{\|u_n^*\|\|x\|}{2^n\|u_n^*\|}=\|x\|.
$$
For this operator, $Tu_n = \frac{ v_n}{2^n\|u_n^*\|}$; consequently
$\lim_{n \to \infty}\|Tu_n\| = 0$. This means that $m(T)=0$ and $(u_n)_{n \in \N}$ is the required minimizing not weakly null sequence. Finally, $T$ is not min-attaining: if $T(x) =0$, then $u_n^*(x)=0$ for all $n \in \N$ which means that $x =0$ (this is exactly the definition of a total system of functionals over $X$).

\vspace{1 mm}
\noindent
\textbf{Proof for the nonseparable $X$}. Let now $(u_n^*)_{n \in \N} \subset X^*$ be a total system of functionals over $X$ with $\|u_n^*\|=1$. The nonseparability of $X$ implies the nonseparability of $X^*$, so we can pick a $g \in X^*$ with $\dist(g, \spann(u_n^*)_{n \in \N}) > 1$. Denote $M_k = \bigcap_{j=1}^k \ker u_j^*$. By the Bipolar Theorem, $M_k^\bot =  \spann\{u_n^*\}_{j=1}^k$. Taking in account the duality between subspaces of $X$ and quotient spaces of $X^*$ \cite[Section 9.4.2, Theorem 1]{kadbook}, we see that 
$$
\left\|g|_{M_k} \right\|_{M_k^*} = \left\|[g] \right\|_{X^*/M_k^\bot } = \dist(g, M_k^\bot) = \dist(g, \spann\{u_n^*\}_{j=1}^k) > 1.
$$
This allows us, for each $n \in \N$, to select a $u_n \in S_{M_n}$ with $g(u_n) > 1$. As in the first part of the proof, pick a normalized basic sequence  $(v_n)_{n \in \N} \subset Y$ and define $T: X \to Y$ by means of the formula \eqref{eq-def-T}. Then
$$
\|Tu_k\| = \left\| \sum_{n=1}^\infty \frac{u_n^*(u_k)}{2^n\|u_n^*\|} v_n\right\| = \left\| \sum_{n=k+1}^\infty \frac{u_n^*(u_k)}{2^n\|u_n^*\|} v_n\right\| \le \sum_{n=k+1}^\infty 2^n \xrightarrow[k \to \infty]{} 0.
$$
Consequently $m(T)=0$ and $(u_n)_{n \in \N}$ is a minimizing sequence for $T$. The condition $g(u_n) > 1$, $n =1,2, \ldots$, implies that $(u_n)_{n \in \N}$ is not weakly null. Finally,  $T$ is not min-attaining by the same reason as in the first part of the proof: if $T(x) =0$, then $u_n^*(x)=0$ for all $n \in \N$; but $(u_n^*)_{n \in \N} \subset X^*$ is total over $X$, which means that $x =0$.
\end{proof}

\begin{remark} \phantom{o}

\begin{enumerate} 
\item  Let $Y = \ell_2$ and $X$ be a non-reflexive space that does not possesses a countable total set of functionals. Then every operator $T\in L(X,Y)$ is not injective so it  is min-attaining, which means that $(X,Y)$ possesses the WmP in spite of the non-reflexivity of $X$. Thus, the condition of existence of a countable total set of functionals on $X$ in Theorem \ref{thm-refl1} cannot be weaken. 

\item Similarly, if $X$ is infinite-dimensional and $\dim Y < \infty$ then every operator $T\in L(X,Y)$ is not injective, so $(X,Y)$ has the WmP independently of reflexivity or non-reflexivity of $X$. This means that the condition  $\dim Y = \infty$ in Theorem \ref{thm-refl1} cannot be dropped neither.

\end{enumerate}
\end{remark}

The next two results deal with the case of reflexive $X$. The first of them is a very simple remark somehow overlooked in the previous papers.

\begin{theorem}\label{thm-refl2}
Let $X$ be reflexive and $Y$ be a Banach space that does not contain an isomorphic copy of $X$, then $(X,Y)$ possesses the WmP.
\end{theorem}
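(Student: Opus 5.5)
Let $T\in L(X,Y)$ admit a minimizing sequence $(x_n)_n\subset S_X$ that is not weakly null. We show that $T$ is min-attaining. We split into two cases according to whether $m(T)>0$ or $m(T)=0$. In the first case we use the hypothesis on $Y$; in the second we use reflexivity and weak lower semicontinuity of the norm.

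\emph{Case $m(T)>0$.} Then $\|Tx\|\ge m(T)\|x\|$ for all $x\in X$, so $T$ is an isomorphism onto its range $T(X)$. Being isomorphic to a complete space, $T(X)$ is a closed subspace of $Y$, and it is isomorphic to $X$. This contradicts the assumption that $Y$ contains no isomorphic copy of $X$. So this case cannot occur. This is the only place where the hypothesis on $Y$ enters. It also covers finite-dimensional $X$: there $Y$ must contain no copy of $X$, which forces every operator to have nontrivial kernel, so $m(T)=0$ and $T$ is min-attaining trivially. The real content is therefore the other case.

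\emph{Case $m(T)=0$.} Here $\|Tx_n\|\to 0$, and it suffices to produce a nonzero $x$ with $Tx=0$; then $x/\|x\|\in S_X$ attains the minimum.
\begin{enumerate}
\item Since $(x_n)$ is not weakly null, there are $f\in X^*$, $\varepsilon>0$ and a subsequence with $|f(x_{n_k})|\ge\varepsilon$.
\item By reflexivity (Eberlein--\v{S}mulian), pass to a further subsequence converging weakly to some $x\in X$. Then $|f(x)|\ge\varepsilon$, so $x\neq 0$.
\item Since $T$ is weak-to-weak continuous, $Tx_{n_k}\to Tx$ weakly in $Y$. By weak lower semicontinuity of the norm, $\|Tx\|\le\liminf_k\|Tx_{n_k}\|=0$.
\end{enumerate}
Hence $Tx=0$ with $x\ne 0$, and $T$ is min-attaining.

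The argument is short and involves no serious obstacle. The one point requiring care is step 2: one must keep the witness $f$ for non-weak-nullity fixed while extracting the weakly convergent subsequence. This is why we first pass to a subsequence along which $|f(x_{n_k})|\ge\varepsilon$, and only then extract. This guarantees that the weak limit is nonzero.
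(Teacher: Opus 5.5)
Your proof is correct and follows the paper's argument: the hypothesis on $Y$ rules out $m(T)>0$ because $T$ would then be an isomorphic embedding. Reflexivity then gives a nonzero weak subsequential limit $x$ of the minimizing sequence with $Tx=0$, and your preliminary extraction of a subsequence with $|f(x_{n_k})|\ge\varepsilon$ usefully makes explicit why that limit can be taken nonzero, a point the paper asserts without justification.
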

\begin{proof}
Let $T: X \to Y$ be a bounded linear operator that admits a non-weakly null minimizing sequence $(x_n) \subset S_X$, 
\begin{equation} \label{eq_2}
\lim_{n\to\infty} \|Tx_n\| = m(T).
\end{equation}
By our assumption on $Y$, $T$ is not an isomorphic embedding, so $m(T) =0$.
Together with \eqref{eq_2} this says that $\lim_{n\to\infty} Tx_n =0$.

By the reflexivity of $X$, $(x_n)$ contains a subsequence  $(x_{n_k})$ that weakly converges to a non-zero element $x \in X$. Then,
$$
T(x) = w-\lim_{k\to\infty} T x_{n_k} = \lim_{k\to\infty} T x_{n_k} = 0,
$$
which means that $T$ attains its $m(T) =0$ at the point $\frac{x}{ \|x\|} \in S_X$.
\end{proof}

As a simple consequence one gets the WmP for all pairs of the form $(L_p[0,1], \ell_q)$, $1<p<\infty$, $1\leq q <\infty$, except of the pair $(L_2[0,1], \ell_2)$ for which the WmP is also valid but by other argument, see \cite[Theorem 2.9]{Cha2020} where in particular the WmP is demonstrated for pairs of infinite-dimensional separable Hilbert spaces. Remark, that the WmP of the pairs $(\ell_p, \ell_q)$ for $1<p<\infty$, $1 \leq q < \infty$ and $p \neq q$ demonstrated in \cite[Theorem 2.9]{Cha2020} follow from our Theorem \ref{thm-refl2}, but \cite[Theorem 2.9]{Cha2020} includes also the case of $p=q$. The above result covers also several examples from \cite{Han2026}, say $(X_r,Y_0)$  from \cite[Corollary 2.8]{Han2026}, but it does not cover many other positive results from \cite{Han2026} where the specific geometry of the involved spaces plays crucial role. We refer to the classical Lindenstrauss-Tzafriri book \cite{LT1977, LT1979} where an extensive information about embeddability or non-embeddability of classical spaces into one another is given. 

Theorem \ref{thm-refl1} demonstrates a strong isomorphic restriction (reflexivity) on the first space in any pair $(X,Y) \in \mathrm{WmP}$ of infinite-dimensional separable spaces. Now we are going to demonstrate that there are no other possible restriction on $X$: every reflexive $X$ serves as the first space in some pair $(X,Y) \in \mathrm{WmP}$ of infinite-dimensional spaces. 

\begin{cor} \label{thm-restr_X}
Let $X$ be a infinite-dimensional reflexive Banach space and $Y$ be an infinite-dimensional Banach space that does not contain infinite-dimensional reflexive subspaces. Then the pair $(X,Y)$ possesses the WmP. In particular, the pairs $(X,\ell_1)$ and $(X,c_0)$ possess the WmP.
\end{cor}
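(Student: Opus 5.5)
This follows directly from Theorem \ref{thm-refl2}. By that theorem it suffices to check that $Y$ contains no isomorphic copy of $X$.

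Suppose, for contradiction, that $Z \subset Y$ is a subspace isomorphic to $X$. Reflexivity is an isomorphic invariant, since it is preserved under isomorphisms of Banach spaces. Being infinite-dimensional is also preserved. Hence $Z$ is an infinite-dimensional reflexive subspace of $Y$, which contradicts the hypothesis on $Y$. Therefore $Y$ contains no copy of $X$, and Theorem \ref{thm-refl2} gives the WmP for $(X,Y)$.

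For the particular cases, it remains to recall the classical facts that $\ell_1$ and $c_0$ contain no infinite-dimensional reflexive subspaces. Every infinite-dimensional closed subspace of $\ell_1$ contains a copy of $\ell_1$, and every infinite-dimensional closed subspace of $c_0$ contains a copy of $c_0$; see \cite{LT1977}, Proposition 2.a.2. Neither $\ell_1$ nor $c_0$ is reflexive, and a subspace of a reflexive space is reflexive. So no infinite-dimensional subspace of $\ell_1$ or $c_0$ can be reflexive.

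There is no genuine obstacle here. The only point requiring a citation is the subspace structure of $\ell_1$ and $c_0$, which we take from Lindenstrauss--Tzafriri.
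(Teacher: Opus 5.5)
Your proposal is correct and follows the paper's proof: you reduce to Theorem \ref{thm-refl2} by noting that an isomorphic copy of $X$ in $Y$ would be an infinite-dimensional reflexive subspace, and you handle $\ell_1$ and $c_0$ via \cite[Proposition 2.a.2]{LT1977}. You also spell out the step from that proposition to the absence of reflexive subspaces (subspaces of reflexive spaces are reflexive), which the paper leaves implicit.
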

\begin{proof} The main part evidently follows from Theorem \ref{thm-refl2}. The fact that $\ell_1$ and $c_0$ do not have  infinite-dimensional reflexive subspaces follows from \cite[Proposition 2.a.2]{LT1977}.
\end{proof}

Recall that a separable Banach space is called isomorphically (isometrically) \emph{universal}
if it contains isomorphic (respectively isometric) copies of every separable Banach space. The next corollary says something about possible second space in an WmP pair.

\begin{cor} \label{thm-restr_Y1}
Let $Y$ be a separable infinite-dimensional Banach space that is not isomorphically universal. Then there is an infinite-dimensional reflexive Banach space $X$ such that  $(X,Y)$ possesses the WmP.
\end{cor}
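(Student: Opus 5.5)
We reduce the statement to Theorem \ref{thm-refl2}: it suffices to find an infinite-dimensional reflexive $X$ that does not embed isomorphically into $Y$.

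The natural candidate comes from the known universal-space results for reflexive spaces. By the theorem of Szlenk, no separable reflexive space contains isomorphic copies of all separable reflexive spaces. Together with the Bourgain/Szlenk-index refinements, a separable Banach space containing isomorphic copies of all separable reflexive spaces is isomorphically universal. Since $Y$ is not universal, this gives a separable reflexive $X$ that does not embed into $Y$.

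The plan is therefore as follows.

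\textbf{Step 1.} Invoke Bourgain's theorem (J.~Bourgain, \emph{On separable Banach spaces, universal for all separable reflexive spaces}, Proc.\ AMS 1980). It states that a separable Banach space containing isomorphs of all separable reflexive spaces contains isomorphs of all separable Banach spaces, i.e.\ it is isomorphically universal. Its proof runs through the Szlenk index, or Bourgain's ordinal index for $\ell_1$-trees: reflexive spaces of arbitrarily large countable index force the index of $Y$ to be $\omega_1$, and then $C[0,1]$ embeds.

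\textbf{Step 2.} Apply this to $Y$. Since $Y$ is not universal, there is a separable reflexive $X$ that is not isomorphic to any subspace of $Y$. This $X$ is automatically infinite-dimensional, because every finite-dimensional space embeds into the infinite-dimensional $Y$.

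\textbf{Step 3.} Apply Theorem \ref{thm-refl2} to the pair $(X,Y)$ to conclude that it has the WmP.

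The main obstacle is Step 1, which is deep and must be cited rather than reproved. An alternative plan avoids Bourgain and uses a single space. By a theorem of Odell--Schlumprecht (or Dodos--Ferenczi), there is a separable reflexive space $Z$, in fact one with a shrinking and boundedly complete basis, that contains isomorphs of all separable Banach spaces with separable dual and shrinking bases. This does not immediately cover everything, however. Using Bourgain's theorem directly seems the cleanest route, so that is the citation I would commit to, with care taken that the statement cited is exactly the "universal for reflexive $\Rightarrow$ universal" form.
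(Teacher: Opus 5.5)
Your argument is correct and is exactly the paper's proof. Bourgain's 1980 theorem (a space universal for separable reflexive spaces is isomorphically universal) yields a separable reflexive $X$ that does not embed into $Y$, and Theorem~\ref{thm-refl2} then gives the WmP; your observation that $X$ must be infinite-dimensional supplies a detail the paper leaves implicit. The Odell--Schlumprecht alternative you mention is misstated, since no reflexive space contains $c_0$, which has a shrinking basis and separable dual, but you discard it, so it does not affect the proof.
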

\begin{proof} According to J.~Bourgains's theorem \cite{Bour1980}, see also  \cite[Theorem 2.14]{HMVZ2008},  there is  a  reflexive separable Banach space $X$ which does not embed isomorphically in $Y$. It remains to apply Theorem \ref{thm-refl2} for this pair $(X,Y)$.
\end{proof}

Theorem \ref{thm-refl1} and \ref{thm-refl2} addressed the cases when either $X$ is not reflexive, or $X$ is reflexive and $Y$ does not contain a copy of $X$. The remaining case when $X$ is reflexive and $Y$ contains a copy of $X$ is addressed in the next theorem.

\begin{theorem} \label{thm-restr_Y}
Let $X$ and $Y$ be Banach spaces, $X$ be reflexive and let $Y$ contain an isomorphic copy of $X$. Then there is an equivalent norm on $Y$ such that, for this equivalent norm, the pair $(X,Y)$ does not possess the WmP.
\end{theorem}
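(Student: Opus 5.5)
Throughout, $X$ is assumed infinite-dimensional: for finite-dimensional $X$ every operator is min-attaining and the WmP is automatic. The counterexample must have $m(T)>0$. Indeed, by the argument in the proof of Theorem \ref{thm-refl2}, reflexivity of $X$ forces min-attainment as soon as $m(T)=0$ and there is a non-weakly null minimizing sequence. So the plan is:
\begin{enumerate}
\item[(a)] take an isomorphic embedding $J\colon X\to Y$ with $Z=J(X)$;
\item[(b)] build an equivalent norm $N$ on $Z$ for which $x\mapsto N(Jx)$ does not attain its infimum on $S_X$ but has a non-weakly null minimizing sequence;
\item[(c)] extend $N$ to an equivalent norm on $Y$ without changing it on $Z$.
\end{enumerate}
Then $T=J$ witnesses the failure of the WmP.

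\textbf{Extension step.} This step is routine and I do it first. Let $N$ be an equivalent norm on $Z$ with $N(z)\le C\|z\|$ for $z\in Z$. Put
$$|y|=\inf\{N(z)+C\|y-z\|:\ z\in Z\},\qquad y\in Y.$$
This is a seminorm (an infimal convolution) and it satisfies $|y|\le C\|y\|$ (take $z=0$).

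\emph{Restriction to $Z$.} For $y\in Z$, the triangle inequality gives $N(y)\le N(z)+N(y-z)\le N(z)+C\|y-z\|$ for every $z\in Z$. Choosing $z=y$ gives the reverse inequality, so $|y|=N(y)$ on $Z$.

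\emph{Lower bound.} Suppose $N\ge c\|\cdot\|$ on $Z$ and $C\ge c$. Then
$$N(z)+C\|y-z\|\ge c\|z\|+c\|y-z\|\ge c\|y\|,$$
so $|y|\ge c\|y\|$ and $|\cdot|$ is an equivalent norm on $Y$. Hence it suffices to work inside $X$ itself. I need an equivalent norm $p$ on $X$ such that $\inf_{x\in S_X}p(x)$ is not attained but is approached along a non-weakly null sequence in $S_X$; then I take $N=p\circ J^{-1}$.

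\textbf{Construction of $p$.} Pick $x_0\in S_X$ and $f\in S_{X^*}$ with $f(x_0)=1$. Choose a normalized basic sequence $(e_n)$ in $\ker f$. Since $X$ is reflexive, every bounded basic sequence is weakly null, so $e_n\to0$ weakly. Put $y_n=(x_0+e_n)/\|x_0+e_n\|\in S_X$. Passing to a subsequence, $\|x_0+e_n\|\to c$ with $c\ge1$, because $f(x_0+e_n)=1$. Thus $y_n\to x_0/c\ne0$ weakly, so $(y_n)$ is not weakly null.

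Define the unit ball of $p$ as
$$W=\overline{\mathrm{aconv}}\Bigl(\tfrac12 B_X\cup\{\pm t_ny_n: n\in\N\}\Bigr),\qquad t_n\uparrow 1.$$
Then $p(x)\ge\|x\|$ and $p(x)\le 2\|x\|$. Minimizing $p$ on $S_X$ is equivalent to maximizing $\|\cdot\|$ on $W$. We have $\sup_W\|\cdot\|=1$, approached by the points $t_ny_n$, so $p(y_n)\le 1/t_n\to1=\inf_{S_X}p$. Hence $(y_n)$ is a non-weakly null minimizing sequence.

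\textbf{Main obstacle: non-attainment.} It remains to show that no $w\in W$ has $\|w\|=1$. By reflexivity, $W$ is the weakly closed absolutely convex hull above. Its elements are norm limits of combinations
$$\lambda u+\sum_n\mu_n t_ny_n,\qquad u\in\tfrac12B_X,\quad \lambda+\sum|\mu_n|\le1.$$
The weak cluster points of $t_ny_n$ are $\pm x_0/c$.

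I would first try to arrange $c>1$, for instance by choosing the $e_n$ so that $\|x_0+e_n\|\ge1+\delta$; such a choice should be obtainable from a basic sequence in $\ker f$, possibly after replacing $x_0$. Then:
\begin{enumerate}
\item[$\bullet$] Points of norm $1$ in $W$ cannot come from the tail, since that contributes only at weak limits of norm $1/c<1$.
\item[$\bullet$] Finite combinations have norm strictly below $1$: each $t_n<1$, and the $\frac12B_X$ part has norm at most $\frac12$.
\end{enumerate}
Making this rigorous needs a norming functional $\varphi\in S_{X^*}$ at a hypothetical $w$ with $\|w\|=1$. One then tests $\varphi$ against the representing combinations: $\varphi(t_ny_n)\le t_n$, and in the limit $\varphi$ evaluated at the weak cluster points is at most $1/c$. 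Taken together these force $\varphi(w)<1$, a contradiction.

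If the rigidity $c>1$ cannot be arranged, my fallback is to replace the ball by
$$\{x:\ \|x\|+\textstyle\sum_n 2^{-n}|g_n(x)|\le1\}$$
for a suitable weak$^*$-null sequence $(g_n)$ of functionals. The aim is to obtain strict inequality at every point while keeping the infimum approached along $(y_n)$.
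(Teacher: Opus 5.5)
\textbf{Genuine gap: the proposal never secures $c>1$, and the construction fails without it.}

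Your architecture matches the paper's. You renorm $Z\cong X$ by the closed absolutely convex hull of $\frac12B_X$ and points $\pm t_ny_n$ approaching the sphere, then extend to $Y$. Your infimal-convolution extension is correct and self-contained; the paper simply cites \cite{FabHHMZ2011}. You are also right that $X$ must be infinite-dimensional, which the paper uses tacitly when it picks an infinite basic sequence.

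The missing step is the one with real content: you need $c>1$, i.e.\ the weak limit $x_0/c$ of the generating points $t_ny_n$ must lie strictly inside $B_X$. Without it the construction is false, not merely unproved. Since $W$ is closed and convex, it is weakly closed, so $x_0/c\in W$. If $c=1$, then $x_0\in W$ gives $1=\|x_0\|\le p(x_0)\le 1$, and the infimum is attained at $x_0$. Your recipe can produce $c=1$, and you give no argument that a good $x_0$ always exists. For example, in $X=\ell_2\oplus_\infty\mathbb{R}$ take $x_0=(0,1)$, $f(a,s)=s$ and $e_n=(\delta_n,0)$ with $\delta_n$ the unit vectors of $\ell_2$. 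Then $\|x_0+e_n\|=1$ for every $n$.

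Your fallback cannot work for any choice of $(g_n)$. Testing $q(x)=\|x\|+\sum_n2^{-n}|g_n(x)|$ on a normalized weakly null sequence gives $\inf_{S_X}q=1$, since $\sum_n 2^{-n}\|g_n\|<\infty$ is needed anyway for $q$ to be an equivalent norm. So along any minimizing sequence $x_k\to x\neq0$ weakly, $g_n(x_k)\to0$ for each $n$. Hence $g_n(x)=0$ for all $n$ and $q(x/\|x\|)=1$; this is just the argument of Theorem~\ref{thm-refl2} again.

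\textbf{The repair.} It is easy, and it is what the paper does: the perturbations need not be normalized. For instance, $y_n=(x_0+3e_n)/\|x_0+3e_n\|$ has $\|x_0+3e_n\|\ge2$, so $c\ge2$. The paper instead uses $\frac12e_1+t_ne_n$ with $\|\frac12e_1+t_ne_n\|=1-2^{-n}$, whose weak limit $\frac12e_1$ has norm $\frac12$.

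Once $c>1$ is secured, your norming-functional sketch becomes the paper's proof, provided you state the uniform bound. Suppose $\varphi\in S_{X^*}$ norms a hypothetical $w\in W\cap S_X$. Then $\varphi(t_ny_n)\to\varphi(x_0)/c$, so $|\varphi(t_ny_n)|<\frac12(1+\frac1c)$ for $n>N$, while $|\varphi(t_ny_n)|\le t_N<1$ for $n\le N$. Hence $\sup_W|\varphi|\le\max\{\frac12,\,t_N,\,\frac12(1+\frac1c)\}<1=\varphi(w)$, a contradiction.
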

\begin{proof}
Let $Z \subset Y$ be the promised subspace isomorphic to $X$ and let $T: X \to Y$, $T(X) = Z$ be the corresponding isomorphic embedding. We are going to build an equivalent norm $\vertiii{\cdot}$ on $Y$ such that, for $\widetilde Y :=(Y, \vertiii{\cdot})$ the operator $\widetilde T: X \to \widetilde Y$ has the following properties that ensure that $(X, \widetilde Y)$ does not possess the WmP:
\begin{enumerate}
\item[$(1)$] $m(\widetilde T) = 1$.

\item[$(2)$] $\widetilde T$ is not min-attaining.

\item[$(3)$] There is a not weakly null sequence  $(u_n)_{n \in \N} \subset S_X$ such that $\lim_{n \to \infty}\vertiii{\widetilde T(u_n)} = 1$.
\end{enumerate}

At first, remark that it is sufficient to define the required norm $\vertiii{\cdot}$ only for elements of $Z$, because every equivalent norm on a subspace extends to an equivalent norm on the whole space (see \cite[Proposition 2.14]{FabHHMZ2011}), and for the properties $(1) - (3)$ only the values of $\vertiii{\cdot}$ on $Z$ are essential.

Let $(e_n)_{n=1}^\infty \subset S_X$ be a basic sequence in $X$. By reflexivity of $X$, $(e_n)$ tends weakly to 0 (weak convergence criterion \cite[17.2.3, Theorem 1]{kadbook} together with the fact that in a reflexive space every basis is shrinking  \cite[Theorem 1.b.5]{LT1977}). Select coefficients $t_n$, $n=2,3, \ldots$ in such a way that $\|\frac12 e_1 + t_n e_n\| = 1 - \frac{1}{2^n}$ and define the unit ball $U \subset Z$ of the required new norm  $\vertiii{\cdot}$ on $Z$ as the closed  convex hull of $T\left(\frac12 B_X \bigcup \left\{\pm\left(\frac12 e_1 + t_n e_n\right)\right\}_{n=2}^\infty\right)$.
Then
$$
\frac12 T(B_X) \subset U \subset  T(B_X) 
$$
which means that $U$ generates an equivalent norm on $Z$ (recall that $T$ is an isomorphism of $X$ and $Z$, so $\frac{1}{\|T^{-1}\|}B_Y \subset T(B_X) \subset \|T\|B_Y$).

Let us demonstrate the properties $(1) - (3)$ of the norm $\vertiii{\cdot}$ on $Z$. At first, the inclusion $ U \subset  T(B_X)$ means that for all $x \in S_X$ we have $\vertiii{Tx} \ge 1$. On the other hand, $T\left(\frac12 e_1 + t_n e_n\right) \in U$, so for the elements 
$$u_n := \frac{\frac12 e_1 + t_n e_n}{\|\frac12 e_1 + t_n e_n\|} \in S_X$$ we have
$$
\vertiii{T(u_n)} =  \frac{1}{\|\frac12 e_1 + t_n e_n\|}\vertiii{T\left(\frac12 e_1 + t_n e_n\right)}= \frac{1}{1 - \frac1n}\vertiii{T\left(\frac12 e_1 + t_n e_n\right)}\le \frac{1}{1 - \frac1n} \xrightarrow[n \to \infty]{} 1.
$$
Together this means that $m(\widetilde T) = 1$ as required in (1). Also, for these $u_n$ the weak limit is equal to $\frac12 e_1$, so $(u_n)$ is not a weakly null sequence and  $\lim_{n \to \infty}\vertiii{\widetilde T(u_n)} = 1$ as required in (3). 

So, it remains to demonstrate (2). Assume to the contrary that there exist a point $v \in S_X$ such that $\vertiii{T(v)}=1$. Then, $T(v) \in U$ which, due to the definition of $U$ means the existence of a sequence 
\begin{align*}
v_n &\in \conv\left(\frac12 B_X \bigcup \left\{\pm \left(\frac12 e_1 + t_n e_n\right)\right\}_{n=2}^\infty\right), 
\\
 v_n &= \lambda_{n,1}\frac12 z + \sum_{k=2}^{m_k} \lambda_{n,k} \left(\frac12 e_1 + t_k e_k\right), z \in B_X, \sum_{k=1}^{m_k} |\lambda_{n,k}| \le 1,
\end{align*}
such that $\vertiii{T(v) - T(v_n)} \xrightarrow[n \to \infty]{} 0$ or, equivalently $\|v - v_n\|  \xrightarrow[n \to \infty]{} 0$. 

Denote $v^*$ a supporting functional of $v$, that is $v^* \in S_{X^*}$ and $v^*(v)=1$. The weak convergence of  $(\frac12 e_1 + t_n e_n)_n$ to $\frac12 e_1$ gives us a number $N\in \N$ such that $|v^*(\frac12 e_1 + t_k e_k)| < \frac34$ for all $k > N$. Consequently,
\begin{align*}
\left|v^*\left(v_n\right)\right| &\le \left| v^*\left(\lambda_{n,1}\frac12 z \right) \right| +\sum_{k=2}^{m_k} \left| v^*\left(\lambda_{n,k} \left(\frac12 e_1 + t_k e_k\right)\right) \right| \\
&\le \frac12|\lambda_{n,1}| + \sum_{k=2}^{N} \left| v^*\left(\lambda_{n,k} \left(\frac12 e_1 + t_k e_k\right)\right) \right| + \sum_{k=N+1}^{m_k} \left| v^*\left(\lambda_{n,k} \left(\frac12 e_1 + t_k e_k\right)\right) \right|  \\
&\le \frac12|\lambda_{n,1}| + \sum_{k=2}^{N} \left|\lambda_{n,k}\right|\left(1 - \frac{1}{2^k}\right) + \sum_{k=N+1}^{m_k} \frac34 \left|\lambda_{n,k}\right| \le \left(1 - \frac{1}{2^N}\right) \sum_{k=1}^{m_k} |\lambda_{n,k}| \le  1 - \frac{1}{2^N}.
\end{align*}
But, on the other hand, $\|v - v_n\|  \xrightarrow[n \to \infty]{} 0$, so $\left|v^*\left(v_n\right)\right|  \xrightarrow[n \to \infty]{} \left|v^*\left(v\right)\right|=1$ which means that we reached a contradiction.
\end{proof}

\begin{cor} \label{thm-restr_Y2}
Let $E$ be a separable infinite-dimensional Banach space that is isometrically universal (for example $E=C[0,1]$). Then for every infinite-dimensional reflexive Banach space $X$ the pair $(X,E)$ does not possess the WmP.
\end{cor}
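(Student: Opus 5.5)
The plan is to combine Theorem \ref{thm-restr_Y} with the isometric universality of $E$. Since $X$ is reflexive and (implicitly, for the universality to apply) separable, it embeds isomorphically into $E$. Theorem \ref{thm-restr_Y} then gives an equivalent norm $\vertiii{\cdot}$ on $E$ for which the pair $(X,(E,\vertiii{\cdot}))$ fails the WmP. The space $\widetilde E := (E,\vertiii{\cdot})$ is again a separable Banach space. Isometric universality therefore yields a linear isometry $J:\widetilde E \to E$ onto a subspace $J(\widetilde E)\subset E$.

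Next I transport the bad operator. Let $\widetilde T : X\to \widetilde E$ be the operator from Theorem \ref{thm-restr_Y}. It has a non-weakly null minimizing sequence $(u_n)$, so $\vertiii{\widetilde T u_n}\to m(\widetilde T)=1$, and it is not min-attaining. Put $S = J\circ \widetilde T : X \to E$. Since $J$ is an isometry, $\|Sx\| = \vertiii{\widetilde T x}$ for all $x$. Hence $\phi_S = \phi_{\widetilde T}$ on $S_X$, so $m(S)=m(\widetilde T)$. The same sequence $(u_n)$ is a non-weakly null minimizing sequence for $S$, and $S$ does not attain its minimum. Therefore $(X,E)$ fails the WmP.

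The only delicate point is separability of $X$. Universality concerns only separable spaces, so for nonseparable reflexive $X$ the statement must be handled differently or read as implicitly separable. I would try to reduce to a separable subspace. Take an infinite-dimensional separable reflexive subspace $X_0\subset X$ and a bad operator $S_0:X_0\to E$ built as above. Then try to extend it to $X$ while keeping it injective and non-min-attaining. This fails in general, since $E$ separable may admit no injective operator from $X$; the Remark after Theorem \ref{thm-refl1} shows such pairs can trivially have the WmP. So I would state and prove the corollary for separable $X$, noting that the example $E=C[0,1]$ is isometrically universal by Banach--Mazur.
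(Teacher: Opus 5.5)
Your proposal is correct and is essentially the paper's proof. You obtain from Theorem~\ref{thm-restr_Y} a separable space (in your case $E$ renormed) for which the pair with $X$ fails the WmP, embed it isometrically into $E$ by universality, and compose, using $\phi_{J\circ\widetilde T}=\phi_{\widetilde T}$. Your separability caveat is genuinely needed, and the paper's proof tacitly assumes it too: for nonseparable reflexive $X$ no countable set in $X^*$ is total (its span would be weak$^*$-dense, hence weakly dense, hence norm-dense in $X^*$), so no operator from $X$ into a separable $E$ is injective, and $(X,E)$ then has the WmP trivially.
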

\begin{proof} According to Theorem \ref{thm-restr_Y}, there is a separable Banach space $Y$ for which  $(X,Y)$ does not have the WmP. Let $T \in L(X,Y)$ be an operator that witnesses this fact. By the universality of $E$ there is an isometric embedding operator $U \in L(Y,E)$. Then  $U \circ T \in L(X,E)$ witnesses the absence of WmP for the pair $(X,E)$. 
\end{proof}

Answering \cite[Questions 3.2(2)]{AGPT}, Dantas, Jung and Mart\'inez-Cervantes
\cite[Corollary 3.1]{DJM} presented a reflexive Banach space $E$ such that $(E,E)$ does not possess the WMP. The following corollary extends the list of such examples.

\begin{cor} \label{thm-square}
Let $E$ be a reflexive infinite-dimensional Banach space that is isomorphic to a square of another Banach space $X$. Then there is an isomorphic copy $\widetilde E$ of $E$ such that  $(\widetilde E,\widetilde E)$ does not possess the WMP.
\end{cor}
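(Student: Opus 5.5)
The plan is to turn the min-attainment counterexample built in the proof of Theorem \ref{thm-restr_Y} into a norm-attainment counterexample, by inverting the isomorphism used there. Then I would place that inverse as an off-diagonal block of an operator on a renormed square of $X$.

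\textbf{Step 1: the renorming of $X$.} Since $E\cong X\oplus X$ is reflexive and infinite-dimensional, $X$ is reflexive and infinite-dimensional. I apply the construction in the proof of Theorem \ref{thm-restr_Y} with $Y=X$, $Z=X$ and $T=\mathrm{Id}$; no extension of the norm from a subspace is needed. This gives an equivalent norm $\vertiii{\cdot}$ on $X$, with $\widetilde X:=(X,\vertiii{\cdot})$, having three properties:
\begin{enumerate}
\item[(a)] $\vertiii{x}\ge\|x\|$ for all $x\in X$ (this is the inclusion $U\subset B_X$);
\item[(b)] there is no $x\in S_X$ with $\vertiii{x}=1$;
\item[(c)] there is a sequence $(u_n)\subset S_X$, not weakly null (its weak limit is $\frac12e_1$), with $\vertiii{u_n}\to1$.
\end{enumerate}

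\textbf{Step 2: the inverse operator.} Let $S:\widetilde X\to X$ be the formal identity. By (a), $\|S\|\le1$. Put $z_n:=u_n/\vertiii{u_n}\in S_{\widetilde X}$. Then $\|Sz_n\|=1/\vertiii{u_n}\to1$, so $\|S\|=1$ and $(z_n)$ is a maximizing sequence for $S$. This sequence is not weakly null, because the two norms are equivalent and $u_n\rightharpoonup\frac12e_1\neq0$.

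$S$ does not attain its norm. Suppose $\vertiii{a}=1$ and $\|a\|=1$. Then $a\in S_X$ with $\vertiii{a}=1$, which contradicts (b).

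\textbf{Step 3: the operator on the square.} Define $\widetilde E:=\widetilde X\oplus_\infty X$, which is isomorphic to $X\oplus X\cong E$. Let $R:\widetilde E\to\widetilde E$ be $R(a,b)=(0,Sa)$.

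The norm of $R$ is at most one: $\|R(a,b)\|=\|Sa\|\le\vertiii{a}\le\|(a,b)\|_\infty$. The sequence $(z_n,0)$ lies in $S_{\widetilde E}$, satisfies $\|R(z_n,0)\|\to1$, and is not weakly null. Hence $\|R\|=1$ and $R$ has a non-weakly null maximizing sequence.

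$R$ does not attain its norm. If $\|(a,b)\|_\infty=1$ and $\|Sa\|=1$, then $1=\|Sa\|\le\vertiii{a}\le1$. So $\vertiii{a}=1$ and $S$ attains its norm at $a$, contradicting Step 2. Therefore $(\widetilde E,\widetilde E)$ fails the WMP.

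\textbf{Main point to check.} The only delicate step is verifying that (a)--(c) really come out of the earlier proof when $T=\mathrm{Id}$. In particular, property (2) of that proof must be read as ``no $x\in S_X$ satisfies $\vertiii{x}=1$''. Everything after that is a routine norm estimate in the $\ell_\infty$-sum.
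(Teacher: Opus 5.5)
Your proposal is correct and follows essentially the paper's route. You invert the isomorphism from the proof of Theorem \ref{thm-restr_Y} to obtain a norm-one, non-norm-attaining operator from the renormed copy of $X$ into $X$ with a non-weakly null maximizing sequence, and then place it as an off-diagonal block of an operator on the direct sum. The only difference is cosmetic: you take $T=\mathrm{Id}$ explicitly and use the $\ell_\infty$-sum, whereas the paper uses $X\oplus_2\widetilde Y$. The norm estimate and the non-attainment argument work identically in either sum.
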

\begin{proof} According to Theorem \ref{thm-restr_Y}, there is a Banach space $\widetilde Y$ isomorphic to $X$ for which  $(X,\widetilde Y)$ does not have the WmP. Pick the corresponding isomorphism $\widetilde T \in L(X,\widetilde Y)$ and $(u_n)_{n \in \N} \subset S_X$ that satisfy properties $(1)$--$(3)$ from the proof of Theorem \ref{thm-restr_Y}.  Then  $(\widetilde Y,X)$ does not have the WMP, and $T^{-1} \in L(\widetilde Y,X), \|T^{-1}\|=1$ with $y_n:= \frac{Tu_n}{\|Tu_n\|} \in S_{\widetilde Y}$ witnessing this fact. Let us take $\widetilde E=X \oplus_2 \widetilde Y$ and for every $x + y \in \widetilde E, x \in X, y \in \widetilde Y$, define $U \in L(\widetilde E,\widetilde E)$ as follows: $U(x + y) = T^{-1} y \in X  \subset \widetilde E$. Then $\|U\|=\|T^{-1}\|=1$, $U$ does not attain its norm, and   $(y_n)_{n \in \N} \subset S_{\widetilde E}$ is the required maximizing sequence for $U$ that does not tend weakly to zero. 
\end{proof}

\section*{Acknowledgments}

The author acknowledges support by the KAMEA program administered by the Ministry of Absorption, Israel. The research is partially supported by MICIU/AEI/10.13039/501100011033 and ERDF/EU through the grant PID2021-122126NB-C31


\begin{thebibliography}{99}


\bibitem{Ac2006} María D. Acosta, \emph{Denseness of norm attaining mappings}. RACSAM, Rev. R. Acad. Cienc. Exactas Fís. Nat., Ser. A Mat \textbf{100} (2006), No. 1-2,  9--30. 


\bibitem{AcKa2011} María D. Acosta, Vladimir Kadets,  \emph{A characterization of reflexive spaces}. Math. Ann. \textbf{349} (2011), No. 3,  577--588. 

\bibitem{AGPT} Aron, R.M., Garc\'ia, D., Pellegrino, D., Teixeira, E.V. \emph{Reflexivity and nonweakly null maximizing sequences}, Proc. Amer. Math. Soc. \textbf{148}(2), 741-750 (2020)

\bibitem{bis-phe}
Errett Bishop and R.~R. Phelps, \emph{A proof that every {B}anach space is
  subreflexive}, Bull. Amer. Math. Soc. \textbf{67} (1961), 97--98.

\bibitem{Bour} J.~Bourgain, \emph{On dentability and the Bishop-Phelps property}, Israel J.Math. \textbf{28} (1977), 265--271.

\bibitem{Bour1980}  J.~Bourgain, \emph{On separable Banach spaces, universal for all separable reflexive spaces}, Proc. Amer. Math. Soc. \textbf{79} (1980), 241--246.

\bibitem{Cha2020}  U.~S.~Chakraborty, \emph{Some remarks on minimum norm attaining operators}. J. Math. Anal. Appl. \textbf{492} (2020), no.~2, 124492.

\bibitem{DJM} Dantas, S., Jung, M. and Mart\'inez-Cervantes, G. \emph{Some remarks on the weak maximizing property}, J. Math. Anal. Appl. \textbf{504} (2021), no.~2, 125433. \url{https://doi.org/10.1016/j.jmaa.2021.125433}

\bibitem{Diestel} \textsc{J. Diestel}, \emph{Geometry of Banach
spaces} Lecture notes in Math. \textbf{485}, Springer-Verlag,
    Berlin, 1975.

\bibitem{FabHHMZ2011} M. Fabian, P. Habala, P. H\'ajek, V. Montesinos Santalucía, and V. Zizler, \emph{Banach space theory. The basis for linear and nonlinear analysis}. 
Berlin: Springer (2011)

\bibitem{GMMR} Domingo García, Manuel Maestre, Miguel Martín, and  Óscar Roldán, \emph{On density and Bishop-Phelps-Bollobás-type properties for the minimum norm}, Mediterr. J. Math. \textbf{21} (2024), No. 5, Paper No. 163, 21 pp.

\bibitem{GuKa1962} V.I. Gurarii, M.I. Kadets, \emph{Minimal systems and quasicomplements in Banach spaces}, Sov. Math. Dokl. \textbf{3} (1962), 966--968.

\bibitem{HMVZ2008} P. Hájek, V. Montesinos Santalucía, J. Vanderwerff, V. Zizler, \emph{Biorthogonal systems in Banach spaces}. New York, NY: Springer (2008)


\bibitem{Han2026} Manwook Han, \emph{Weak minimizing property on pairs of classical Banach spaces}.  arXiv:2601.17316

\bibitem{James} James, R. C. \emph{Characterizations of reflexivity}, Stud. Math. \textbf{23} (1964), 205--216.

\bibitem{kadbook} V.~Kadets, \textit{A course in Functional Analysis and Measure Theory}. Translated from the Russian by Andrei Iacob. Universitext. Cham: Springer. xxii, 539~p. (2018).

\bibitem{KLMW}
V.~Kadets, G.~Lopez, M.~Mart\'{\i}n, and D.~Werner,
\emph{Norm attaining operators of finite rank}. In:  Aron, Richard M.; Gallardo Guti\'errez, Eva A.; Martin, Miguel; Ryabogin, Dmitry; Spitkovsky, Ilya M.; Zvavitch, Artem (editors). The mathematical legacy of Victor Lomonosov. Operator theory. Advances in Analysis and Geometry 2. Berlin: De Gruyter, 300~p. (2020), 157--187. 

\bibitem{lin2}
Joram Lindenstrauss, \emph{On operators which attain their norm}, Israel J.
  Math. \textbf{1} (1963), 139--148.


\bibitem{LT1977} Lindenstrauss, J., Tzafriri, L. \emph{Classical Banach spaces. I. Sequence spaces}, Ergebnisse der Mathematik und ihrer Grenzgebiete vol. 92. Springer, Berlin-New York (1977)


\bibitem{LT1979} Lindenstrauss, J., Tzafriri, L. \emph{Classical Banach spaces. II: Function spaces}, Ergebnisse der Mathematik und ihrer Grenzgebiete vol. 97. Springer, Berlin-New York (1979)



\bibitem{MM1964}
D.P. Mil'man, V.D. Mil'man, \emph{Some properties of non-reflexive Banach
spaces}.   Mat. Sb. \textbf{65} (1964), 486--497.



\end{thebibliography}
\end{document}